\numberwithin{equation}{section}
\DeclareMathOperator{\WP}{WP}
\DeclareMathOperator{\hyp}{hyp}
\DeclareMathOperator{\thick}{Thick}
\newtheorem{theorem}{Theorem}[section]
\newtheorem{lemma}[theorem]{Lemma}
\newtheorem{corollary}[theorem]{Corollary}
\newtheorem{proposition}[theorem]{Proposition}
\newcommand{\teichmuller}{Teichm{\"u}ller{ }}
 \let\c@theorem=\c@subsection
 \let\c@conjecture=\c@subsection
 \let\c@lemma=\c@subsection
 \let\c@proposition=\c@subsection
 \let\c@claim=\c@subsection
 \let\c@question=\c@subsection
 \let\c@criterion=\c@subsection
 \let\c@vfconj=\c@subsection
 \let\c@definition=\c@subsection
 \let\c@notation=\c@subsection
 \let\c@remark=\c@subsection
 \let\c@example=\c@subsection
 \let\c@equation=\c@subsection
 \let\c@figure=\c@subsection
 \let\c@wrapfigure=\c@subsection
\begin{document}
\title{Weil--Petersson geodesics on the modular surface}

\author[Gadre]{Vaibhav Gadre}
\address{\hskip-\parindent
     School of Mathematics and Statistics\\
     University of Glasgow\\
     University Place\\
      Glasgow\\
      G12 8QQ United Kingdom}
\email{Vaibhav.Gadre@glasgow.ac.uk}

 
\keywords{\teichmuller theory, Moduli of Riemann surfaces.}
\subjclass[2010]{30F60, 32G15, 37D40, 53D25, 37A25}


\begin{abstract}
We consider the Weil--Petersson (WP) metric on the modular surface.
We lift WP geodesics to the universal cover of the modular surface and analyse geometric properties of the lifts as paths in the hyperbolic metric on the universal cover. 
For any pair of distinct points in the thick part of the universal cover, we prove that the WP and hyperbolic geodesic segments that connect the pair, fellow-travel in the thick part and all deviations between these segments arise during cusp excursions. 
Furthermore, we give a quantitative analysis of the deviation during an excursion. 

We leverage the fellow traveling to derive a correspondence between recurrent WP and hyperbolic geodesic rays from a base-point.
We show that the correspondence can be promoted to a homeomorphism on the circle of directions. 
By analysing cuspidal winding of a typical WP geodesic ray, we show that the homeomorphism pushes forward a Lebesgue measure on the circle to a singular measure.
In terms of continued fraction coefficients, the singularity boils down to a comparison that we prove, namely, the average coefficient is bounded along a typical WP ray but unbounded along a typical hyperbolic ray.
\end{abstract}


\maketitle


\section{Introduction}
We consider the Weil--Petersson metric on the modular surface $S$. 
Lifting a WP geodesic segment to the universal cover of $S$, we analyse its geometric properties in the hyperbolic metric on the universal cover. 
The thick part of the universal cover is the complement of lifts of cusp neighbourhoods. 
For any pair of distinct points in the thick part, we prove that the WP and hyperbolic geodesic segments connecting these points fellow travel in the thick part with all deviations between them arising during excursions in cusp neighbourhoods. 
Furthermore, we give a quantitative analysis of these deviations during an excursion. 
Our results should extend to mildly constrained general WP-type metrics.
For this reason, we keep our discussions on a more general footing. 

We now provide a more precise description.
For notation, we denote $S$ with the WP metric by $X$ and $S$ with the hyperbolic metric by $Y$. 
We will denote the universal cover of $S$ by $\mathbb{D}$ and the lift of the metrics by $d_{\WP}$ and $d_{\hyp}$ respectively.

Let $\delta$ denote the distance to the cusp in $X$. 
For some $B > 0$ small enough, we may define a cusp neighbourhood by the condition $\delta < B$. 
We call the boundary locus $\delta < B$, a WP-horocycle.
When $B$ is small enough the cusp neighbourhood $N = \{ \delta < B\}$ is convex. 

The convexity of $N$ and the compactness of $X \setminus N$ have the following simple consequence: there exists a smaller neighbourhood $\{ \delta < b \}$ where $0 < b < B$ such that for any pair of points $p, q$ in $X \setminus N$, the shortest WP geodesic connecting them is disjoint from $\{ \delta < b \}$. 

The consequence stated above implies that a lift of $N$ to the universal cover $\mathbb{D}$ is also quasi-convex in the hyperbolic metric on $\mathbb{D}$. 
So a lift of $N$ to $\mathbb{D}$ is approximated by a horoball in $d_{\hyp}$.
In Section \ref{s.fellow}, we will clarify in a more precise technical sense how the hyperbolic horocycle bounding the largest horoball contained in a lift of $N$ approximates the WP horocycle. 
The $\pi_1(S)$-orbit of the horoball  gives us a collection $\mathcal{H}$ of disjoint horoballs in $\mathbb{D}$. 
We shall call the region in $\mathbb{D}$ that is complementary to the collection of these horoballs the \emph{thick part} of $\mathbb{D}$. 
Note that quotient of the thick part by $\pi_(S)$ is thick in either metric, that is, it is disjoint from a cusp neighbourhood in both $d_{\WP}$ and $d_{\hyp}$. 

Let $p, q$ be a pair of distinct points in the thick part. 
Let $\gamma_{\WP}[p,q]$ and $\gamma_{\hyp}[p,q]$ be respectively the WP and hyperbolic geodesic segments between $p$ and $q$.

A \emph{thick-thin decomposition} of the WP segment $\gamma_{\WP}[p,q]$ is a partition of $\gamma_{\WP}[p,q]$ into WP geodesic sub-segments as
\begin{equation}\label{e.WP-decompose}
\gamma_{\WP}[p,q] = [y_0, y_1] \cup [y_1, y_2] \cup \dots \cup [y_{2k-1}, y_{2k}] \cup [y_{2k}, y_{2k+1}],
\end{equation}
where 
\begin{itemize}
\item $k \geqslant 0$,
\item $y_0 = p$, $y_{2k+1} = q$, 
\item the segments $[y_{2r}, y_{2r+1}]$ for $r = 0, k$ are contained in the thick part, and
\item the open segments $(y_{2r-1}, y_{2r})$ for $r = 1, \cdots, k$ if non-empty, are contained in a horoball in $\mathcal{H}$.
\end{itemize}
Analogously, a thick-thin decomposition of $\gamma_{\hyp}[p,q]$ is a partition of $\gamma_{\hyp}[p,q]$ into hyperbolic geodesic sub-segments as 
\begin{equation}\label{e.hyp-decompose}
\gamma_{\hyp}[p,q] = [z_0, z_1] \cup [z_1, z_2] \cup \dots \cup [z_{2j-1}, z_{2j}] \cup [z_{2j}, z_{2j+1}],
\end{equation}
where 
\begin{itemize}
\item $j \geqslant 0$, 
\item $z_0 = p$, $z_{2j+1} = q$ and 
\item the segments $[z_{2r}, z_{2r+1}]$ for $r = 0, j$ are contained in the thick part, and
\item the open segments $(z_{2r-1}, z_{2r})$ for $r = 1, \cdots, j$, if non-empty, are contained in a horoball in $\mathcal{H}$ 
\end{itemize} 

We prove the following theorem.

\begin{theorem}\label{t.main}
There is a constant $R> 0$ such that for any pair of distinct points $p,q$ in the thick part of $\mathbb{D}$, there exist thick-thin decompositions (\ref{e.WP-decompose}) and (\ref{e.hyp-decompose}) of $\gamma_{\WP}[p,q]$ and $\gamma_{\hyp}[p,q]$ respectively, such that  $j =k$ and the segments $[z_{2r},z_{2r+1}]$ $R$-fellow travel (in either metric) the segments $[y_{2r}, y_{2r+1}]$ for $r = 0, \cdots, k$.
\end{theorem}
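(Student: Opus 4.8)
The plan is to carry out the entire argument inside the hyperbolic plane $(\mathbb{D}, d_{\hyp})$, using $\gamma_{\hyp}[p,q]$ as the reference geodesic and showing that a controlled modification of $\gamma_{\WP}[p,q]$ is a uniform $d_{\hyp}$-quasi-geodesic; the theorem then follows from the stability of quasi-geodesics in a Gromov hyperbolic space, together with a dictionary matching the cusp excursions on the two sides. The first ingredient is a comparison of the two metrics on the thick part: the quotient $X \setminus N$ is compact and carries two smooth Riemannian metrics, so their lifts are bi-Lipschitz equivalent, with some constant $L \geqslant 1$, on the thick part of $\mathbb{D}$. Consequently $d_{\WP}$- and $d_{\hyp}$-fellow traveling of paths lying in the thick part are equivalent after adjusting the constant, which is precisely what lets us assert fellow traveling ``in either metric'' at the end.

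Next I would set up the modified path. By the convexity of the cusp neighbourhood and the no-deep-penetration consequence recorded above, each maximal sub-arc of $\gamma_{\WP}[p,q]$ inside a horoball $H \in \mathcal{H}$ enters and leaves $H$ through its bounding horocycle, and, since distinct horoballs in $\mathcal{H}$ are separated by a definite thick distance, each such excursion lies in a single horoball; this already produces the decomposition (\ref{e.WP-decompose}). Let $\sigma$ be the path obtained from $\gamma_{\WP}[p,q]$ by replacing every excursion with the $d_{\hyp}$-geodesic chord joining its entry and exit points on $\partial H$. I claim $\sigma$ is a uniform $d_{\hyp}$-quasi-geodesic. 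Since $(\mathbb{D}, d_{\hyp})$ is Gromov hyperbolic, it suffices to verify that $\sigma$ is a local quasi-geodesic at the definite scale set by the horoball separation: the chords are honest geodesics; each thick WP sub-segment is locally $d_{\hyp}$-efficient because the bi-Lipschitz comparison converts its WP-minimising property into hyperbolic efficiency, with a bounded additive error, on bounded thick scales; and the transitions do not backtrack because the horoballs are $d_{\hyp}$-quasi-convex, so each chord leaves $H$ without re-crossing the adjacent thick arcs. The local-to-global principle for quasi-geodesics in hyperbolic spaces then upgrades this to a global statement with uniform constants.

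With $\sigma$ a uniform quasi-geodesic, stability of quasi-geodesics gives a constant $R_0$, depending only on the quasi-geodesic constants and the hyperbolicity constant, so that $\sigma$ and $\gamma_{\hyp}[p,q]$ are $R_0$-Hausdorff close. The final step is to convert this into the matching $j = k$ together with segment-by-segment fellow traveling. Here I would fix the horoballs deep enough, equivalently $B$ small enough, that the horocyclic core of each horoball lies at $d_{\hyp}$-distance greater than $2R_0$ from the thick part, while the thick region separating any two horoballs has transverse $d_{\hyp}$-width bounded below. Then $R_0$-closeness forces $\gamma_{\hyp}[p,q]$ to enter exactly the horoballs that $\gamma_{\WP}[p,q]$ enters, in the same order, so the decompositions (\ref{e.WP-decompose}) and (\ref{e.hyp-decompose}) can be indexed compatibly with $j = k$. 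On each matched pair of thick segments, $R_0$-closeness in $d_{\hyp}$ then upgrades, via the bi-Lipschitz comparison of the first paragraph, to $R$-fellow traveling in either metric.

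I expect the genuine obstacle to be the quasi-geodesic claim of the second paragraph, and within it the behaviour of $\gamma_{\WP}[p,q]$ in the boundary layer where a thick segment meets a horoball. Away from the boundary layer the bi-Lipschitz comparison is all that is needed, but near $\partial H$ one must control both the hyperbolic efficiency of the WP segment and the absence of short-scale backtracking when passing to the chord; this is exactly where the convexity of $N$ in $d_{\WP}$ and the quasi-convexity of its lift in $d_{\hyp}$, rather than mere bi-Lipschitz comparison, do the real work. A secondary technical point is calibrating the depth of the horoballs against $R_0$ in the last paragraph so as to exclude phantom or grazing excursions, where one geodesic merely touches a horoball that the other avoids.
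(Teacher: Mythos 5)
Your strategy (surgery on $\gamma_{\WP}$, then Morse stability in the ambient hyperbolic plane) is genuinely different from the paper's, but its central claim --- that $\sigma$ is a uniform quasi-geodesic in $(\mathbb{D}, d_{\hyp})$ --- is not established by the tools you invoke, and this is a real gap, not a technicality. Bi-Lipschitz equivalence of the two metrics on the thick part compares only the two \emph{path metrics} on $\thick$; it says nothing about ambient $d_{\hyp}$-distances, and the inclusion of $(\thick, \text{path metric})$ into $(\mathbb{D}, d_{\hyp})$ is exponentially distorted: two points at distance $d$ apart along a horocycle $\partial H$ have thick path distance $\asymp d$ but ambient hyperbolic distance $\asymp 2\log d$. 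So a path can be an honest geodesic of the thick path metric (a long horocyclic arc) and fail completely to be an ambient quasi-geodesic; your argument for the thick pieces, applied verbatim, would ``prove'' that such an arc is one. Quantitatively, the local-to-global principle requires the local constants $(\lambda,\epsilon)$ to be fixed \emph{before} the scale $L$ is chosen, and with only the thick bi-Lipschitz comparison the best local constant you can certify for a thick WP sub-segment at scale $L$ is of order $L/\log L$, which degenerates --- exactly as for the horocycle. What is missing is an input special to the WP metric, which you never invoke: for instance the global pointwise domination $\|\cdot\|_{\WP} \leqslant C\,\|\cdot\|_{\hyp}$ (visible from the models $(dx^2+dy^2)/y^3$ versus $(dx^2+dy^2)/y^2$, and compactness of the thick quotient), which gives $d_{\WP} \leqslant C\, d_{\hyp}$ on all of $\mathbb{D}$ and hence, combined with the bi-Lipschitz bound on lengths, makes every thick WP segment a uniform ambient quasi-geodesic. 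Likewise, ``the horoballs are $d_{\hyp}$-quasi-convex, so the chords do not backtrack'' is not a quantitative non-backtracking statement that can feed the local-to-global machine; some coarse contraction/non-expansion property of the projection to $\partial H$ (the role played in the paper by Lemma \ref{l.contract}, following Farb) is needed at the transitions, especially when the intervening thick segments are short.

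Two further points. First, your plan to rule out ``phantom or grazing excursions'' by calibrating the horoball depth against $R_0$ cannot work: for any fixed calibration there are geodesics penetrating a horoball exactly to the threshold depth while the companion geodesic misses it; the matching $j = k$ has to be arranged, as in the paper's proof of Theorem \ref{t.main}, by allowing the corresponding excursion sub-segment on the other side to be \emph{empty}, which the decompositions (\ref{e.WP-decompose}) and (\ref{e.hyp-decompose}) permit. Second, for contrast: the paper avoids the distortion problem entirely by never leaving the thick part. It projects both geodesics to $\thick$, shows (Proposition \ref{p.quasi}, via the $\mathrm{CAT}(-\kappa)$ contraction onto geodesics, Lemma \ref{l.contract}, and a Gauss--Bonnet separation argument) that both projected paths are quasi-geodesics in their respective thick path metrics, and then applies Svar\'{c}--Milnor: both path metrics are quasi-isometric to $\pi_1(S)$, a free group, so Morse stability holds there and yields fellow traveling in the thick path metric (Corollary \ref{c:fellow}) --- a conclusion stronger than ambient closeness, which then descends to both ambient metrics at once. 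If you want to salvage your route, the two items to supply are precisely the global comparison $d_{\WP} \leqslant C\, d_{\hyp}$ and a quantitative non-backtracking estimate at the entry and exit points of each chord.
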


We will also estimate the deviation in $d_{\hyp}$ between the geodesics during each cusp excursion, that is, we will estimate the hyperbolic distance between corresponding points on the open segments $(z_{2r-1}, z_{2r})$ and $(y_{2r-1}, y_{2r})$ when they are both non-empty and large enough.

The coarse geometric viewpoint offers a clarifying perspective of Theorem \ref{t.main}.
Recall that the modular surface is the moduli space of hyperbolic once-punctured tori. 
The WP metric completion of $\mathbb{D}$ includes noded surfaces that arise by pinching the systole on a marked once-punctured torus.
It follows that the WP completion is quasi-isometric to the Farey graph with every edge having length one.
In either metric, the thick part of the universal cover $\mathbb{D}$ is quasi-isometric to the tree dual to the Farey triangulation. 
A geodesic segment in the tree can be written as a finite sequence of alternating rights and lefts. 
Any pair of triangles that differ only by rights (or only by lefts) share a vertex in the Farey graph. 
When the number of consecutive rights (or lefts) is more than two, the shortest path in the Farey graph has length two and passes through the shared vertex. 
In other words, the WP metric has a shortcut that passes through the noded surface corresponding to the shared vertex. 
The actual WP geodesic follows this shortcut closely by doing a cusp excursion winding around the noded surface.
This feature is shared by the hyperbolic metric except that the cusps are infinitely far away.
The actual hyperbolic geodesic achieves its shortcut by a cusp excursion of hyperbolic length that is logarithmic in the number of consecutive rights (or lefts). 

\subsection{The circle map:} 
Let $p \in \mathbb{D}$ be a base-point. 
Passing to a smaller WP cusp neighbourhood if required, we may assume that $p$ is in the thick part. 
Consider a WP ray $\gamma$ from $p$ that recurs to the thick part. 
By using Theorem~\ref{t.main} along recurrence times $\gamma_t$, we get hyperbolic geodesic segments $\gamma_{\hyp}[p , \gamma_t]$. 
We will show that we can pass to a limit along these hyperbolic geodesic segments to associate a hyperbolic ray $\gamma'$ from $p$ which fellow travels $\gamma$ in the thick part and all deviations arise during cusp excursions. 
In particular, the ray $\gamma'$ is also recurrent to the thick part.

We may identify $S^1$ with the unit tangent circles $T_p^1 (\mathbb{D}, d_{\WP})$ and $T_p^1 (\mathbb{D}, d_{\hyp})$ in the respective metrics.
The limiting argument on recurrent directions sketched above defines a map $\psi$ from a dense subset of $S^1$ to $S^1$. 
In the reverse direction, we can apply a similar limiting argument to associate a recurrent WP ray to every recurrent hyperbolic ray.  
This defines a circle map $\xi$ from a dense subset of $S^1$ to $S^1$. 
It also follows that the compositions $\psi \circ \phi$ and $\phi \circ \psi$ are always defined and $\phi \circ \psi = \text{id}$ and $\psi \circ \phi = \text{id}$.
Thus $\psi$ is a bijection from the dense set of recurrent WP-directions to the dense set of hyperbolic recurrent directions.
We will further show that $\psi$ and $\phi$ are monotone.
From the density of the domains and the monotonicity, we may conclude that $\psi$ and $\phi$ extend to homeomorphisms of $S^1$ that are inverses of each other. 

Let $\ell$ be a Lebesgue measure on $S^1$.
The Liouville measures $\mu_{\WP}$ or $\mu_{\hyp}$ are absolutely continuous with respect to each other.
After identifying $S^1$ with either of the unit tangent circles $T_p^1 (\mathbb{D}, d_{\WP})$ and $T_p^1 (\mathbb{D}, d_{\hyp})$, we may consider the measure $\ell$ to be the conditional measure on the circle of directions obtained from either of these Liouville measures. 
The WP flow is ergodic \cite{Pol-Wei}, and in fact, exponentially mixing \cite{BMMW}. Thus, after identifying $T_p^1 (\mathbb{D}, d_{\WP})$ with $S^1$, it follows that $\ell$-almost every $v \in S^1$ determines a recurrent WP ray. 
Thus the map $\psi$ is well-defined on a full measure subset of $S^1$. 

We prove:

\begin{theorem}\label{t.singular}
The push-forward measure $\psi_\ast(\ell)$ is singular with respect to $\ell$.
\end{theorem}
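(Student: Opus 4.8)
The plan is to translate Theorem~\ref{t.singular} into a dichotomy about continued fraction coefficients, exactly as the abstract indicates, and then to read off singularity from the fact that the coefficient statistics of $\ell$ and $\psi_\ast\ell$ differ on a set of full measure. First I would set up the combinatorial coding. Every recurrent ray from $p$, whether WP or hyperbolic, determines an infinite reduced edge-path in the tree dual to the Farey triangulation, hence an infinite word in lefts and rights, hence a continued fraction expansion with coefficients $a_1, a_2, \dots$, where $a_i$ is the number of consecutive turns of the same type and equals the winding number of the $i$-th cusp excursion. Write $\Phi_{\WP}$ and $\Phi_{\hyp}$ for the resulting coding maps from the (dense, full-measure) sets of recurrent directions into the sequence space $\Sigma = \mathbb{N}^{\mathbb{N}}$. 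The crucial structural input is that $\psi$ preserves coding: by Theorem~\ref{t.main} a recurrent WP ray and its associated hyperbolic ray fellow travel in the thick part, so they follow the same reduced edge-path in the dual tree and hence have identical left--right words; therefore $\Phi_{\hyp}\circ\psi=\Phi_{\WP}$. Pushing $\ell$ forward I obtain $\nu_{\hyp}=(\Phi_{\hyp})_\ast \ell$ and $\nu_{\WP}=(\Phi_{\WP})_\ast \ell$ on $\Sigma$, and coding-preservation gives $(\Phi_{\hyp})_\ast(\psi_\ast\ell)=(\Phi_{\WP})_\ast\ell=\nu_{\WP}$. Thus, transported to $\Sigma$ through the single measurable map $\Phi_{\hyp}$, the measure $\ell$ becomes $\nu_{\hyp}$ while $\psi_\ast\ell$ becomes $\nu_{\WP}$; so it suffices to show $\nu_{\hyp}\perp\nu_{\WP}$ and then pull the separating set back through $\Phi_{\hyp}$.

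Next I would treat the hyperbolic side, which is classical. Under $\Phi_{\hyp}$ the conditional Liouville measure $\ell$ pushes forward to a measure in the class of the Gauss measure, for which the partial quotients form a stationary ergodic sequence with $\nu_{\hyp}(a_1=k)\asymp k^{-2}$, so that $\mathbb{E}_{\nu_{\hyp}}[a_1]=\sum_k k\,\nu_{\hyp}(a_1=k)=\infty$. By the ergodic theorem for non-integrable observables (equivalently Khinchin's theorem that the arithmetic mean of partial quotients diverges almost everywhere), $\tfrac1n\sum_{i\le n}a_i\to\infty$ for $\nu_{\hyp}$-almost every sequence. Hence $\nu_{\hyp}$ is concentrated on the set $B\subset\Sigma$ of sequences with unbounded Ces\`aro average of coefficients.

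The heart of the argument is the WP side: I must show that $\nu_{\WP}$ is concentrated on the complementary set $A$ of sequences with bounded average. Since the WP flow is ergodic \cite{Pol-Wei}, the induced first-return map to the boundary of the thick part is ergodic, and $a_1$ is an observable on this cross-section whose distribution under the conditional Liouville measure is $\nu_{\WP}(a_1=\cdot)$. If $\mathbb{E}_{\nu_{\WP}}[a_1]<\infty$, then Birkhoff's theorem yields $\tfrac1n\sum_{i\le n}a_i\to \mathbb{E}_{\nu_{\WP}}[a_1]<\infty$ for $\nu_{\WP}$-almost every sequence, whence $\nu_{\WP}(A)=1$. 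Finiteness of the mean reduces to the summable tail bound $\nu_{\WP}(a_1\ge a)=O(a^{-2})$, that is, to estimating the $\ell$-measure of the band of directions whose excursion winds at least $a$ times around the node. This is precisely where the WP cusp geometry departs from the hyperbolic picture: using Wolpert's asymptotic expansion of the WP metric near a node together with the quantitative excursion estimate accompanying Theorem~\ref{t.main}, the band of directions achieving winding $\ge a$ should have $\ell$-measure decaying like $a^{-2}$, in contrast to the $a^{-1}$ decay that drives the divergence on the hyperbolic side. Establishing this tail bound rigorously is the main obstacle, since it requires upgrading the coarse winding estimate to a sharp measure estimate on the circle of directions; this is where I expect the real work to lie.

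Finally I would assemble the conclusion. The sets $A$ and $B$ are disjoint Borel subsets of $\Sigma$ with $\nu_{\WP}(A)=1$ and $\nu_{\hyp}(B)=1$, so $\nu_{\hyp}(A)=0$. Setting $\widetilde A=\Phi_{\hyp}^{-1}(A)$, I obtain $(\psi_\ast\ell)(\widetilde A)=(\Phi_{\hyp})_\ast(\psi_\ast\ell)(A)=\nu_{\WP}(A)=1$ while $\ell(\widetilde A)=(\Phi_{\hyp})_\ast\ell(A)=\nu_{\hyp}(A)=0$. Thus $\psi_\ast\ell$ is carried by $\widetilde A$ and $\ell$ by its complement, so $\psi_\ast\ell\perp\ell$, which proves Theorem~\ref{t.singular}.
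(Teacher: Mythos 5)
Your overall strategy --- transporting $\ell$ and $\psi_\ast\ell$ to the sequence space of continued fraction coefficients and separating them by the dichotomy ``bounded versus unbounded Ces\`aro average of the coefficients'' --- is exactly the comparison the paper makes; the paper phrases it as linear versus $S\log S$ growth of the total cuspidal winding number and then gives the average-coefficient reformulation in Section \ref{s.random}. Your hyperbolic side is sound: for a stationary ergodic sequence with $\nu_{\hyp}(a_1=k)\asymp k^{-2}$, truncation plus Birkhoff gives $\tfrac1n\sum_{i\leqslant n}a_i\to\infty$ almost surely (the paper instead cites the stronger $S\log S$ lower bound of \cite[Theorem 1.6]{Gad}). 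The problem is the WP side, which you yourself flag as ``the main obstacle'': the tail bound $\nu_{\WP}(a_1\geqslant a)=O(a^{-2})$, equivalently finiteness of the mean winding per excursion, is the actual content of the theorem, and your proposal contains no proof of it --- only the statement that it ``should'' hold. This is a genuine gap, not a technicality. The paper does not derive this estimate from scratch either; it imports it from \cite[Theorem 5.9]{Gad-Mat} (linear growth of total winding along $\ell$-typical WP rays) together with \cite[Section 5.10]{Gad-Mat} (comparability of WP and hyperbolic times at recurrence points), results whose proofs rely on the precise decay of correlations coming from exponential mixing of the WP flow \cite{BMMW}, not merely on a Birkhoff argument applied to a first-return map. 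Without either those citations or an independent proof of the measure estimate for deep WP excursions, your argument does not close.

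Two smaller points. First, your claim that $\psi$ exactly preserves the coding, $\Phi_{\hyp}\circ\psi=\Phi_{\WP}$, is too strong: Theorem \ref{t.main} permits an excursion of one geodesic to be matched with an \emph{empty} excursion of the other, and matched excursions have windings that agree only coarsely, so the two coefficient sequences agree only up to bounded additive error and insertion or deletion of boundedly small coefficients --- which is what the paper actually asserts in Section \ref{s.random}. This is harmless for your dichotomy, since bounded average is invariant under such perturbations, but the clean identity $(\Phi_{\hyp})_\ast(\psi_\ast\ell)=\nu_{\WP}$ that your transport step uses should be replaced by this coarse statement (or the WP coefficients should simply be defined via the corresponding hyperbolic ray, as the paper does). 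Second, your Birkhoff step would additionally require ergodicity of the induced cross-section map and the identification of the induced measure with the one governing $\nu_{\WP}(a_1=\cdot)$, as well as the passage from ``almost every point of the cross-section'' to ``almost every direction from the fixed base-point $p$''; the paper sidesteps all of this because the results of \cite{Gad-Mat} are already stated for $\ell$-almost every direction at a base-point.
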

 
Theorem \ref{t.singular} boils down to a comparison of the statistics of cuspidal winding numbers along typical WP and hyperbolic rays. 
In Section \ref{s.random}, we will reformulate this comparison in terms of continued fraction coefficients along typical WP and hyperbolic rays, namely, we will show that the \texttt{"}average\texttt{"} coefficient is bounded along a typical WP ray but unbounded along a typical hyperbolic ray.

\subsection{WP-type metrics on finite type surfaces:}

We now mention the constraints under which the results presented here should extend to general WP-type metrics on surfaces of finite type. 
The first requirement is that cusp neighbourhoods can be chosen to be convex and their horocycles closely approximate the corresponding horocycles for the surfaces of revolution that model them. 
The precise nature of the approximation required is discussed in Section \ref{s.horo} and we discuss why it holds for the modular surface.

For a cusp modelled on the surface of revolution for $y = x^r \, : \, r  \geqslant 3$, the convexity of a small enough cusp neighbourhood translates to a constraint on the lower order terms in the expansion of the metric near the cusp with the metric on a surface of revolution as the leading term. 
See \cite[Remark on page 242]{BMMW} for the details. 

Similarly, the required properties for WP horocycles mentioned in Section \ref{s.horo} should also be derivable once there is a reasonable constraint on the lower order terms in the expansion of the cusp metric. 
We leave the details of this derivation to the interested reader.

\subsection{The WP metric on non-exceptional moduli spaces:}

The analogous passage between WP and \teichmuller geodesics on non-exceptional moduli spaces is likely to be much more involved. 
For example, in the exceptional case the existence of the limiting ray relies on the curvatures being pinched, that is, negative and bounded away from zero.
This is no longer true for non-exceptional moduli. 

Brock-Masur-Minsky associated to ending laminations to WP rays. 
See \cite[Definition 2.5]{BMM}. 
It then turns out that there exist recurrent WP rays with non-uniquely ergodic ending laminations.
See \cite[Theorem 1]{Bro-Mod}. 
As such, a naive limiting argument as we have above cannot hold. 
It is possible however typical WP rays do admit a more subtle limiting argument.

\subsection{Acknowledgements:} I am extremely grateful to Carlos Matheus for the extensive clarifying discussions that I had with him during the course of this work. I would also like to thank Scott Wolpert for his helpful comments on an earlier draft of the paper.

\section{Fellow traveling in the thick part}\label{s.fellow} 

\subsection{WP and hyperbolic horocycles in model co-ordinates:}\label{s.horo}

To compare the two metrics near the cusp, it will be convenient to work in the upper half space co-ordinates $\{ z = x +i y) \, : \, y > 0\}$ on $\mathbb{D}$ with the cusp at infinity.
The Poincare metric on the upper half-space descends to the hyperbolic metric on the modular surface. 
As such, a hyperbolic cusp neighbourhood in these co-ordinates can be chosen to be of the form $\{ y > \text{constant} \}$.

Viewed as the \teichmuller space of hyperbolic once-punctured tori, the upper half-space co-ordinates are closely related to the Fenchel-Nielsen co-ordinates. 
For a marked hyperbolic once-punctured torus $\tau \in S$, let $\ell = \ell_\tau$ be the hyperbolic length of its systole.
Let $t = t_\tau$ be the twist parameter about the systole.
In \cite[Corollary 4.10]{Wol}, Wolpert showed that the WP distance $\delta(\tau)$ of $\tau$ to the cusp of $S$ satisfies
\begin{equation}\label{e.distance} 
\delta = \sqrt{2\pi \ell} + O (\ell^{5/2}).
\end{equation} 

To keep the model expressions for the WP metric simple, we will allow our choice of the co-ordinates $(x,y)$ to be flexible up to a homothety of the form $z \to \alpha z $ for $\alpha \in \mathbb{R}_{> 0}$.
If $B$ is small enough, the homothety constants we will need to choose are uniform. 
This means that our estimates will change only up to multiplicative constants that depend on $B$, which is small enough and fixed once and for all.

Up to such a uniform homothety, the co-ordinates $(x,y)$ near $\tau$ written in terms of the Fenchel--Nielsen co-ordinates $(\ell, t)$ have the following form:
\begin{equation}\label{e.FN}
\ell =  \frac{1}{y} + h(x,y), \hskip 15pt t =  \frac{x}{y} + h'(x,y).
\end{equation} 
where $h(x,y)$ and $h'(x,y)$ comprise of lower order terms.
In particular, we may write $h(x,y)$ as $h(x,y) = g(x,y)/y^2$, where we may assume that both $g$ and its first partials are bounded.
We will use expressions in (\ref{e.FN}) to pass from Fenchel--Nielsen co-ordinates to the upper half space co-ordinates $(x,y)$. 

From equation (\ref{e.distance}) above, a WP horocycle $\{ \delta = B \}$ corresponds to $y = 2 \pi/ B^2$ to the first order.  
Up to uniform homothety, this is the same as $y = 1/B^2$. 
As
\[
\frac{\partial \delta}{ \partial y} = \left( \sqrt{\frac{\pi}{2 \ell}} + O(\ell^{3/2}) \right) \frac{\partial \ell}{\partial y}
\]
if $B$ is small enough then so is $\ell$ and hence $\partial \ell / \partial y \neq 0$.  
By the implicit function theorem, the horocycle $\{ \delta = B \}$ can be written as $(x, f_B(x))$. 
Now the derivative $df_B/ dx = - (\partial \delta / \partial x )/ (\partial \delta / \partial y) = - (\partial \ell/ \partial x)/ (\partial \ell / \partial y)$. By direct computation we get
\[
\frac{\partial \ell / \partial x}{ \partial \ell/ \partial y} = \frac{(1/y^2) \partial g / \partial x}{ (-1/y^2)(1 - \partial g/ \partial y + 2 g / y) } = -\frac{\partial g/ \partial x}{1 - \partial g / \partial y + 2 g/ y} 
\]
As $B$ and hence $y$ is small, it follows that the right hand side is bounded. 
We thus conclude that when the horocycle $\{ \delta = B \}$ is viewed as a graph of a function over $x$, the derivative at any point of this graph is bounded. 
In particular, this implies that the Euclidean arc-length element along $\{ \delta = B \}$ is comparable to $dx$. 

The form of the WP metric can be derived by invoking Wolpert's estimates \cite{Wol} in the special case of the modular surface. 
By converting \cite[Page 284]{Wol} to the co-ordinates $(x,y)$ up to a uniform homothety, the WP metric is modelled to the first order by 
\begin{equation}\label{e.metric}
\frac{dx^2 + dy^2}{y^3}. 
\end{equation}

Let $I[a,b]$ be the strip $\{ (x, y) \in \mathbb{D} \, : \,  a \leqslant x \leqslant b \}$.
Using the model metric (\ref{e.metric}) and the discussion on the Euclidean arc length element it follows that if $(b-a)$ is large enough (depending only on $B$) then the WP length of $\{ \delta = B \} \cap I[a,b]$ equals $(b-a)/ B^6$ up to a multiplicative constant that depends only on $B$. 
We note that $(b-a)/ B^6$ is also the length of the segment $\{ y = 1/B^2\} \cap I[a, b]$ in the model metric.
On the other hand, the hyperbolic length of $\{ y = 1/ B^2 \} \cap I[a, b]$ equals $(b-a)/B^4$.
Thus, up to a multiplicative constant that depends only on $B$, the WP and hyperbolic lengths of these related horocycle segments are equal, and furthermore we fix $B$ once and for all ahead of time. 
This observation will be crucial in the discussion that precedes Lemma \ref{l.contract}.

\subsection{Projected Paths:} 
Let $\thick$ denote the thick part.
Let $\pi_{\WP}: \mathbb{D} \to \thick$ and $\pi_{\hyp}: \mathbb{D} \to \thick$ be the closest point projections on to the thick part in the corresponding metrics.
For any pair of points $p, q$ in $\thick$ let $\gamma_{\WP}[p,q]$ and $\gamma_{\hyp}[p,q]$ be the WP and hyperbolic geodesics connecting them. 
We call $\pi_{\hyp}(\gamma_{\hyp})$ the \emph{hyperbolic projected path} of $\gamma_{\hyp}$. 
Similarly, we call $\pi_{\WP}(\gamma_{\WP})$ the \emph{WP-projected path} of $\gamma_{\WP}$. 

\begin{proposition}\label{p.quasi}
For any pair of points $p, q$ in $\thick$ the projected paths $\pi_{\hyp}(\gamma_{\hyp}[p,q])$ and $\pi_{\WP}(\gamma_{\WP}[p,q])$ are quasi-geodesics in $\thick$ with the corresponding path metrics.
\end{proposition}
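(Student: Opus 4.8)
The plan is to treat the two assertions in parallel: in each metric we are projecting a geodesic onto the complement of a disjoint family of convex (hyperbolic case) or quasi-convex (WP case) obstacles, namely the horoballs of $\mathcal{H}$. I would carry out the argument in detail for $\pi_{\hyp}(\gamma_{\hyp}[p,q])$ and then transfer it to the WP-projected path using the model metric (\ref{e.metric}), the convexity of the WP cusp neighbourhood, and the comparison of WP and hyperbolic horocyclic lengths established in Section \ref{s.horo}. Throughout, let $\rho$ denote the relevant path metric on $\thick$, and parametrise the projected path $\sigma$ by its $\rho$-arclength. With this parametrisation the upper quasi-geodesic bound $\rho(\sigma(s),\sigma(t)) \leqslant |s-t|$ is automatic, since $\sigma$ is itself a path in $\thick$. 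The entire content is therefore the lower bound: $\sigma$ must not backtrack.

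First I would analyse the projection locally, near a single horoball. Placing the horoball at infinity in upper half-space co-ordinates so that its boundary is $\{y = c\}$, the nearest-point projection onto the complement sends $(x,y) \mapsto (x,c)$, and since a geodesic chord has monotone $x$-co-ordinate its image is exactly the bounding horocyclic arc between the entry and exit points, traversed monotonically. The WP model metric (\ref{e.metric}), together with the bounded-slope description of the WP-horocycle in Section \ref{s.horo}, gives the same qualitative picture up to bounded multiplicative error and bounded backtracking, which I would absorb into the constants. Consequently $\sigma$ is a concatenation of two kinds of pieces: thick sub-arcs of the original geodesic, and horocyclic arcs. Each thick sub-arc is an ambient geodesic lying in $\thick$, hence a $\rho$-geodesic; and each horocyclic arc is the $\rho$-geodesic between its endpoints, because for a convex obstacle the shortest path in its complement between two boundary points follows the boundary, and the disjointness of the horoballs ensures that this arc already lies in $\thick$ so that the remaining obstacles cannot shorten it. Thus $\sigma$ is a concatenation of $\rho$-geodesic segments whose lengths add up to the $\rho$-length of $\sigma$.

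The remaining, and main, difficulty is to promote this piecewise-geodesic path to a global quasi-geodesic, that is, to rule out backtracking across many pieces with uniform constants. The obstruction is that the projection is badly distorted: two points that are $d_{\hyp}$-close near the top of a deep horoball can have $\rho$-far projections along the horocycle, so $\pi_{\hyp}$ is \emph{not} coarsely Lipschitz into $(\thick,\rho)$ and no soft projection argument applies. The resolution is to exploit that $\gamma_{\hyp}$ is a genuine geodesic. I would pass to the tree $\mathcal{T}$ dual to the Farey triangulation, to which $(\thick,\rho)$ is quasi-isometric as recalled in the introduction. Since a hyperbolic geodesic crosses each edge of the Farey graph at most once, the sequence of ideal triangles it traverses is a non-backtracking path in $\mathcal{T}$, hence a tree geodesic; in particular the shadow of $\gamma_{\hyp}$, and therefore of $\sigma$, makes monotone progress in $\mathcal{T}$ and cannot double back. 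Feeding this monotonicity together with the local $\rho$-geodesic structure into the local-to-global (stability) principle for quasi-geodesics in the Gromov-hyperbolic space $(\thick,\rho)$ yields uniform quasi-geodesic constants depending only on $B$ and on the hyperbolicity constant of $(\thick,\rho)$.

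The step I expect to be the crux is this last one, controlling backtracking quantitatively, and two points there deserve care. First, geodesics that merely clip a horoball produce very short pieces, for which the junction turn is not bounded below; these are handled using the freedom in the choice of horoball size (compare the smaller neighbourhood $\{\delta < b\}$ introduced above) and by absorbing the negligible clippings into the additive constant. Second, one must check that the quasi-isometry to $\mathcal{T}$ and the local-to-global constants can be chosen uniformly in the pair $(p,q)$; this follows from the cocompactness of the $\pi_1(S)$-action on $\thick$ and the uniform model geometry near every horoball established in Section \ref{s.horo}.
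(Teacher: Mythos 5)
The hyperbolic half of your argument is workable, but the Weil--Petersson half has a genuine gap, and that is the half carrying the content of the proposition (the paper quotes the hyperbolic case from earlier work). The crux of your proof is monotonicity of the shadow in the dual tree $\mathcal{T}$, and you justify it only for $\gamma_{\hyp}$: it rests on the fact that an ambient hyperbolic geodesic crosses each Farey edge --- itself a complete hyperbolic geodesic --- at most once. Nothing you invoke gives the analogous statement for $\gamma_{\WP}$. The ``transfer'' you propose, via the model metric (\ref{e.metric}), convexity of the cusp neighbourhood and the horocycle comparison of Section \ref{s.horo}, consists of purely local comparisons inside a single cusp neighbourhood; it cannot control whether the global path $\gamma_{\WP}$ doubles back in $\mathcal{T}$ while remaining in the thick part. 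Nor can you transfer the conclusion through the quasi-isometry between $(\thick,d_{\WP})$ and $(\thick,d_{\hyp})$: a quasi-isometry of the thick parts says nothing about where the \emph{ambient} WP geodesic from $p$ to $q$ travels, and any statement that $\gamma_{\WP}$ fellow travels $\gamma_{\hyp}$ (or that their projected paths coarsely agree) is exactly Theorem \ref{t.main} and Corollary \ref{c:fellow}, which sit \emph{downstream} of Proposition \ref{p.quasi}; invoking it here is circular. To salvage your route for WP you would need to realize the Farey edges as WP geodesics (say in the WP completion, as fixed loci of the corresponding involutions), verify that these form an equivariant triangulation whose edges meet only the horoballs at their ideal endpoints, and then obtain ``crossed at most once'' from uniqueness of geodesics in a $\text{CAT}(-\kappa)$ space; none of this is in your proposal.

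Two further remarks. First, even in the hyperbolic case the local-to-global stability principle is not the tool that closes the argument: your path is a concatenation of $\rho$-geodesics, but at the junctions (especially at clippings) it need not be locally a quasi-geodesic at any definite scale, which is what that principle requires. What actually works is the monotonicity itself: each Farey edge is crossed exactly once and in order, each piece makes tree progress coarsely equal to its $\rho$-length, and the uniform separation of distinct horoballs in $\mathcal{H}$ bounds interior thick pieces below so that the accumulated additive errors are controlled by the total length; this has to be assembled by hand. Second, for comparison, the paper's proof needs no Farey combinatorics and treats both metrics uniformly: at the junction points $y_j$ it erects geodesics $\beta_j$ perpendicular to $\gamma_{\WP}$, shows by Gauss--Bonnet that they are pairwise disjoint and hence cut $\mathbb{D}$ into regions that any competing thick path must cross in order, and within each region bounds the competitor from below by the corresponding piece of the projected path, using coarse contraction of the nearest-point projection onto a geodesic (from $\text{CAT}(-\kappa)$) and onto a horoball boundary (Lemma \ref{l.contract}). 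That robustness is precisely what lets the paper claim its results should extend to general WP-type metrics, where there is no Farey tessellation to lean on.
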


For the hyperbolic metric, this proposition is given in \cite[Lemma 2.1]{Gad-Mah-Tio}. 
Here, we adapt the argument in \cite[Lemma 2.1]{Gad-Mah-Tio} to the WP metric. 
The WP metric on $\mathbb{D}$ is negatively curved and pinched away from 0. 
Hence, it is $\text{CAT}(-\kappa)$ for some $\kappa > 0$ and thus Gromov hyperbolic. 
See \cite[Introduction]{Pol-Wei-Wol}.

Let $H$ be a horoball in the collection $\mathcal{H}$ and let $\pi_H : \mathbb{D} \setminus H  \to \partial H$ be the WP closest point projection.
In a Hadamard manifold with pinched negative curvature, the closest point projection to a horosphere is coarsely contracting. 
See \cite[Section 4]{Far}.
Here we give an analogous result for $\pi_H$ but focus only on showing that $\pi_H$ is coarsely distance non-increasing as that is sufficient for our purposes.

Before we state the projection result in Lemma \ref{l.contract}, we make use of the observation at the end of Section \ref{s.horo}. 
The quotient $H / \pi_1(S)$ is a quasi-convex cusp neighbourhood on the WP surface $X$. 
Let $0 < B' < B$ be the largest constant such that $N' = \{\delta < B' \}$ is contained in $H / \pi_1(S)$. 
The approximations \ref{e.metric}  and \ref{e.distance} imply that if $B$ is small enough then $B' \geqslant B/2$.
Let $H'$ be the lift of $N'$ that is contained in $H$. 
Let $\pi_{H'}: \mathbb{D} \setminus H \to \partial H'$ be the WP closest point projection. 
By the observation at the end of Section \ref{s.horo}, it follows that for any segment $\gamma$ in $\mathbb{D} \setminus H$ the projections $\pi_H(\gamma)$ and $\pi_{H'} (\gamma)$ are coarsely the same, that is, there exists constants $k \geqslant 1$ and $c \geqslant 0$ such that 
\[
\frac{1}{k} \ell_{\partial H} (\pi_H(\gamma)) - c < \ell_{\partial H'} \pi_{H'} (\gamma) < k \ell_{\partial H} (\pi_H(\gamma)) + c
\]
where $\ell_{\partial H}$ and $\ell_{\partial H'}$ denote the lengths in the WP path metrics on $\partial H$ and $\partial H'$ respectively. 

We now show that $\pi_H$ is coarsely distance non-increasing. 

\begin{lemma}\label{l.contract}
The WP closest point projection $\pi_H : \mathbb{D} \setminus H  \to \partial H$ is coarsely distance non-increasing, that is, there exists constants $K > 1, C > 0$ such that for any pair of points $p, q \in \mathbb{D} \setminus H$
\[
d_{\partial H} (\pi_H(p), \pi_H(q)) \leqslant K d_{\WP} (p,q) + C.
\]
\end{lemma}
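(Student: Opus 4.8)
The plan is to adapt Farb's contraction argument for horoballs in pinched negatively curved Hadamard manifolds \cite[Section 4]{Far} to the CAT$(-\kappa)$ setting of the WP metric, reducing the desired coarse Lipschitz bound to a \emph{unit-scale contraction} that is then chained along the geodesic $\gamma_{\WP}[p,q]$. Two observations make the reduction possible. First, since $H$ is convex and the WP metric is CAT$(0)$, the projection $\pi_H$ agrees with the nearest-point projection onto the closed convex set $\overline{H}$, and so is $1$-Lipschitz in the \emph{ambient} WP metric: $d_{\WP}(\pi_H(p),\pi_H(q)) \leqslant d_{\WP}(p,q)$. Second, the computation at the end of Section \ref{s.horo} shows that along $\partial H$ the WP path-length is comparable, up to a multiplicative constant depending only on $B$, to Euclidean arc length; at bounded scale this means that the path metric $d_{\partial H}$ and the restriction of the ambient metric to $\partial H$ agree up to uniform multiplicative and additive constants, with no large-scale distortion entering. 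Thus the path-metric bound we want will follow by summing unit-scale ambient contributions.

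The crux is the unit-scale contraction: I want a constant $D$ so that every WP ball of radius one that is disjoint from $H$ has $\pi_H$-image of $d_{\partial H}$-diameter at most $D$. Away from the cusp the metric is CAT$(-\kappa)$, so geodesics diverge exponentially and the thin-triangle comparison yields this contraction exactly as in the Hadamard case, the only change being that curvature comparison is invoked synthetically rather than through the exponential map. Near the cusp I would pass to the model coordinates of Section \ref{s.horo}, where the metric is modelled by $(dx^2+dy^2)/y^3$ and $\partial H$ by a level set $\{y = 1/B^2\}$. There the divergence of geodesics and the projection can be estimated directly, and the comparison of WP and hyperbolic horocyclic lengths lets me import the corresponding hyperbolic estimate, so that $D$ is uniform across the transition from the bulk to the cusp. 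The already established coarse equality of $\pi_H$ and $\pi_{H'}$ lets me run the model computation on whichever of $\partial H$ or $\partial H'$ is more convenient and then transfer the conclusion back.

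With the unit-scale contraction in hand I finish by chaining. Subdividing $\gamma_{\WP}[p,q]$ into $\lceil d_{\WP}(p,q)\rceil$ sub-segments of length at most one, consecutive points on the geodesic lie in a common unit ball disjoint from $H$, so their projections are within $D$ in $d_{\partial H}$; concatenating and applying the triangle inequality in the path metric gives $d_{\partial H}(\pi_H(p),\pi_H(q)) \leqslant K\, d_{\WP}(p,q) + C$ with $K$ and $C$ depending only on $D$ and the constants of Section \ref{s.horo}.

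The main obstacle, and the point at which the WP picture departs essentially from the hyperbolic one, is that the cusp lies at \emph{finite} WP distance: a geodesic joining two points of $\mathbb{D}\setminus H$ may pass through $H$, short-cutting past the noded surface, and for such a pair $d_{\WP}(p,q)$ can remain bounded while $\pi_H(p)$ and $\pi_H(q)$ recede along $\partial H$. The chaining estimate is therefore genuinely available along the part of $\gamma_{\WP}[p,q]$ lying in $\mathbb{D}\setminus H$, which is the situation in which the lemma is used, since the segments we project are the thick-part pieces lying in the complement of the horoballs. The remaining difficulty is quantitative: one must show that the contraction constant $D$ stays uniform all the way up to $\partial H$ using only the model metric $(dx^2+dy^2)/y^3$ and the horocyclic length comparison in place of a space-form structure, and one must pass cleanly from the ambient bound to the path-metric bound $d_{\partial H}$. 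I expect this near-cusp uniformity to absorb most of the work.
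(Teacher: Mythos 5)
Your proposal has the right general flavour (a local contraction estimate chained along the geodesic, with a model computation near the cusp), but two of its load-bearing steps fail, and both failures trace back to the feature you only flag at the end: the cusp completing $H$ lies at finite WP distance, roughly $B$, from \emph{every} point of $\partial H$. First, this makes your opening observation false. Any two points of $\partial H$, no matter how many times one winds around the cusp relative to the other, are at ambient WP distance at most about $2B$ (concatenate the two near-radial segments running past the cusp point), while their distance in the path metric $d_{\partial H}$ is unbounded. So the ambient metric restricted to $\partial H$ has bounded diameter, ambient-closeness on $\partial H$ implies nothing about path-closeness, and there is no scale at which a bound of the form $d_{\partial H} \leqslant K\, d_{\WP} + C$ holds for the restriction. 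Consequently the $1$-Lipschitz property of the $\mathrm{CAT}(0)$ projection onto $\overline{H}$ contributes nothing toward the conclusion, and ``summing unit-scale ambient contributions'' cannot produce a path-metric bound.

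Second, the chaining step has a structural gap. Your unit-scale contraction is formulated for WP balls of radius one \emph{disjoint from} $H$, i.e.\ centred at distance at least one from $H$. But the pairs to which the lemma is applied (thick paths crossing the regions $R_j$ in the proof of Proposition \ref{p.quasi}) are joined by geodesics that typically run within distance much less than one of $\partial H$; around such points no unit ball disjoint from $H$ exists, so your lemma is inapplicable exactly where the estimate matters. The local statement you would actually need there --- two points outside $H$ joined by a \emph{short path staying outside} $H$ have $d_{\partial H}$-close projections --- is not a limiting case of the ball statement, and by the winding example above it becomes false if ``joined by a path outside $H$'' is weakened to ``ambient-close''; proving it is essentially the entire content of the lemma at small scale, so your reduction defers rather than resolves the difficulty. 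The paper supplies precisely this ingredient by a different device: it passes to the projection onto the smaller horocycle $\partial H'$ (legitimate by the length comparison at the end of Section \ref{s.horo}), observes that the WP geodesics $[p,z]$ and $[q,z]$ to the cusp point $z$ meet $\partial H'$ almost orthogonally at points uniformly close to $\pi_{H'}(p)$ and $\pi_{H'}(q)$, and then compares the triangle with vertices $z,p,q$ to a $\mathrm{CAT}(-\kappa)$ comparison triangle --- exploiting that $H'$ is essentially a metric ball about $z$ while $[p,q]$ avoids the strictly larger ball $H$ --- to bound the arc of $\partial H'$ between the projections by $d_{\WP}(p,q)$; general pairs are then handled by approximating paths in $\mathbb{D}\setminus H$ by concatenations of geodesics avoiding $H$. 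To close your argument you would need an estimate of this kind (a direct computation for short paths outside $H$ in the model metric $(dx^2+dy^2)/y^3$, in the spirit of Clairaut's relation for a surface of revolution, would also do); your correct observation that the statement should only be invoked for paths avoiding $H$ does not by itself fill this hole.
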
 

\begin{proof}
We give a quick sketch. 
By our preceding observation that the projections $\pi_H$ and $\pi_{H'}$ are coarsely the same, it suffices to prove this property for $\pi_{H'}$. 

First, suppose $p, q$ are distinct points in $\mathbb{D} \setminus H$ such that the WP geodesic segment $[p,q]$ is contained in $\mathbb{D} \setminus H$. 
Let $z$ be the point that completes the cusp in $H$.
Let $[p,z]$ and $[q,z]$ be WP geodesics from $p$ and $q$ to $z$. 

As $B$ was chosen small enough for the metric near the cusp to approximate the model metric (\ref{e.metric}) closely, 
the segments $[p,z]$ and $[q,z]$ intersect $\partial H'$ at almost right angles.
This implies that there exists a constant $c' \geqslant 0$ that depends only on the metric and the constant $B$ such that $\pi_{H'}(p)$ is $c'$-close to the point $p' = [p,z] \cap \partial H'$. 
Similarly $\pi_{H'} (q)$ is $c'$-close to the point $q' = [q,z] \cap \partial H'$.

The triangle with vertices $z, p, q$ can now be compared to the comparison triangle in constant negative curvature $-\kappa$ to conclude that the length of $[p' q']$ along $\partial H'$ is strictly less than $d_{\WP} (p,q)$. 
We deduce that the length of $\pi' [p,q]$ along $\partial H'$ is coarsely less than $\ell_{\WP} [p,q]$.

Finally, any path in $\mathbb{D} \setminus H$ can be closely approximated by a finite concatenation of geodesic segments in $\mathbb{D} \setminus H$. 
The lemma follows.
\end{proof} 

\begin{proof}[Proof of Proposition~\ref{p.quasi} for the WP metric] 
Suppose that the number of horoballs in $\mathcal{H}$ that the geodesic segment $\gamma_{\WP}[p,q]$ passes through is  $k$. 
We decompose $\pi_{\WP}(\gamma_{\WP})$ into segments $[y_0, y_1] \cup [y_1, y_2] \cup [y_2, y_3] \cup \dots \cup [y_{2k-1}, y_{2k}] \cup [y_{2k}, y_{2k+1}]$, where $y_0 =$, $y_{2k+1} =q$, all intervals of the form $[y_{2j}, y_{2j+1]}$ are WP geodesic segments in $\thick$ and the remaining intervals are segments along distinct horocycles. 

Let $\beta_j$ be the WP geodesics perpendicular to $\gamma_{\WP}[y_0, y_{2k+1}]$ at the points $y_j$. 
Each $\beta_j$ separates $y_0$ from $y_{2k+1}$ so a thick geodesic from $y_0$ to $y_{2k+1}$ must intersect $\beta_j$. Moreover, the geodesics $\beta_j$ are all disjoint. 
Otherwise,  for some $j$ we would find a triangle bounded by the geodesic segments of $\beta_j, \beta_{j+1}$ and $\gamma_{\WP}[y_j, y_{j+1}]$. 
As the WP metric has pinched negative curvature, this violates the Gauss-Bonnet Theorem as the sides are geodesic segments and two angles of the triangle are right angles. 

In conclusion, the geodesics $\beta_j$ divide $\mathbb{D}$ into regions $R_j$ such that each $R_j$ contains a single subsegment of $\gamma_{\WP} [y_0, y_{2k+1}]$ that is either entirely contained in $\thick$ or entirely contained in a horocycle. 
In each $R_j$, we now show that the subsegment of the projected path is coarsely the shortest path in the induced path metric, that is any thick geodesic segment that makes it across $R_j$ between the endpoints has length coarsely larger than the subsegment of the projected path of $\pi_{\WP}(\gamma_{\WP})$ in $R_j$.

First consider a region $R_j$ where $[y_{2j}, y_{2j+1}]$ is a WP geodesic segment in $\thick$. 
As the metric is $\text{CAT}(-\kappa)$ for some $\kappa > 0$, the nearest point projection to a geodesic is coarsely contracting. 
Thus the length of any path that crosses from $\beta_{2j}$ to $\beta_{2j+1}$ is coarsely at least the length of $[y_{2j}, y_{2j+1}]$. 

Now consider a region $R_j$ where $[y_{2j-1}, y_{2j}]$ is a subsegment of a horocycle $\partial H$.
By lemma \ref{l.contract}, the closest point projection $\pi_H: \thick \to \partial H$ is also coarsely contracting. 
Thus the length of any thick path that crosses $R_j$ from $\beta_{2j-1}$ to $\beta_{2j}$ is coarsely at least the length $\ell_{\partial H}(y_{2j-1}, y_{2j})$.

This finishes the proof of the proposition for the WP metric.

\end{proof}

The fundamental group $\pi_1(S)$ acts co-compactly on $\thick$.
Hence, by the Svar\'{c}-Milnor lemma, $\thick$ with either WP or hyperbolic induced path metric is quasi-isometric to the fundamental group $\pi_1(S)$ with a word metric given by any finite generating set. 
In particular, $(\thick, d_{\WP})$ and $(\thick, d_{\hyp})$ are quasi-isometric.
As a further corollary of this quasi-isometry and of Proposition \ref{p.quasi}, we get

\begin{corollary}\label{c:fellow}
There exists a constant $R > 0$ such that for any pair of points $p,q$ in $\thick$ the segments $\pi_{\WP}[p,q]$ and $\pi_{\hyp}[p,q]$ $R$-fellow travel in $\thick$. 
\end{corollary}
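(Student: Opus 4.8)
The plan is to realize both projected paths as quasi-geodesics inside a \emph{single} Gromov hyperbolic space and then appeal to the stability of quasi-geodesics (the Morse lemma). Although $\pi_{\WP}[p,q]$ and $\pi_{\hyp}[p,q]$ are defined using different metrics, the two path metrics on $\thick$ agree up to a quasi-isometry, so a path that is a quasi-geodesic for one is automatically a quasi-geodesic for the other; and in a Gromov hyperbolic space any two quasi-geodesics sharing a pair of endpoints remain uniformly close.

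First I would assemble the coarse geometry already in hand. By the Svar\'{c}--Milnor lemma both $(\thick, d_{\WP})$ and $(\thick, d_{\hyp})$ are quasi-isometric to $\pi_1(S)$ with a word metric, so the identity on the underlying set is a quasi-isometry $\iota \colon (\thick, d_{\WP}) \to (\thick, d_{\hyp})$ whose constants depend only on $S$ and on the fixed choice of $B$. Since $\thick$ is quasi-isometric to the tree dual to the Farey triangulation, and a tree is $0$-hyperbolic, Gromov hyperbolicity being a quasi-isometry invariant of geodesic spaces forces both $(\thick, d_{\WP})$ and $(\thick, d_{\hyp})$ to be $\delta$-hyperbolic for a uniform $\delta$. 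Because $p, q \in \thick$ are fixed by $\pi_{\WP}$ and $\pi_{\hyp}$, both $\pi_{\WP}[p,q]$ and $\pi_{\hyp}[p,q]$ are paths from $p$ to $q$.

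Next I would transport both paths into the one space $(\thick, d_{\WP})$. Proposition~\ref{p.quasi} shows that $\pi_{\WP}[p,q]$ is a $(\lambda,\epsilon)$-quasi-geodesic in $(\thick, d_{\WP})$ and that $\pi_{\hyp}[p,q]$ is a $(\lambda,\epsilon)$-quasi-geodesic in $(\thick, d_{\hyp})$, with constants independent of $p, q$ since the construction is $\pi_1(S)$-equivariant and the coarse-contraction constants of Lemma~\ref{l.contract} depend only on the metric and $B$. As $\iota$ is a quasi-isometry, re-reading the point set $\pi_{\hyp}[p,q]$ with respect to $d_{\WP}$ keeps it a quasi-geodesic, with constants enlarged only by those of $\iota$. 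Thus in $(\thick, d_{\WP})$ I have two quasi-geodesics, with uniform constants, joining the common pair $p, q$. The Morse stability lemma in the (proper) $\delta$-hyperbolic geodesic space $(\thick, d_{\WP})$ then places each within a $D = D(\delta,\lambda,\epsilon)$-neighbourhood of the $d_{\WP}$-geodesic $[p,q]$, hence within Hausdorff distance $2D$ of one another in $d_{\WP}$; pushing this across $\iota$ yields the analogous bound in $d_{\hyp}$, and taking the larger constant $R$ gives $R$-fellow traveling in either metric.

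I expect the only genuine difficulty to be the bookkeeping that makes every constant \emph{uniform} in $p$ and $q$. This is exactly where co-compactness of the $\pi_1(S)$-action and the $B$-dependence, rather than $(p,q)$-dependence, of the estimate in Lemma~\ref{l.contract} are needed: they guarantee that the quasi-geodesic constants of Proposition~\ref{p.quasi}, the quasi-isometry constants of $\iota$, and the hyperbolicity constant $\delta$ may be fixed once and for all, so that the resulting $R$ is independent of the chosen pair.
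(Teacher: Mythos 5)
Your proposal is correct and follows essentially the same route as the paper: Proposition~\ref{p.quasi} (together with \cite[Lemma 2.1]{Gad-Mah-Tio} for the hyperbolic side) makes both projected paths quasi-geodesics, the Svar\'{c}--Milnor quasi-isometry between $(\thick, d_{\WP})$ and $(\thick, d_{\hyp})$ puts them in a common Gromov hyperbolic space, and stability of quasi-geodesics gives the fellow traveling. The paper's proof leaves the hyperbolicity of $\thick$ and the Morse lemma implicit; your write-up simply makes those ingredients, and the uniformity of the constants, explicit.
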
 

\begin{proof}
By \cite[Lemma 2.1]{Gad-Mah-Tio}, the projected path $\pi_{\hyp}[p,q]$ is a quasi-geodesic in $(\thick, d_{\hyp})$, and by Proposition \ref{p.quasi}, the projected path $\pi_{\WP}[p,q]$ is a quasi-geodesic in $(\thick, d_{\WP})$. As $(\thick , d_{\WP})$ and $(\thick, d_{\hyp})$ are quasi-isometric, both projected paths $\pi_{\WP}[p,q]$ and $\pi_{\hyp}[p,q]$ being quasi-geodesics, fellow travel.

\end{proof}

\subsection*{Proof of Theorem \ref{t.main}:}
By Corollary \ref{c:fellow}, the projected paths $\pi_{\WP}[p,q]$ and $\pi_{\hyp}[p,q]$ $R$-fellow travel. This implies that there is a constant $R'> 0$ such that if $\pi_{\WP}(\gamma_{WP})$ has a subsegment of length at least $R'$ along a horocycle $\partial H$ then $\pi_{\hyp}(\gamma_{hyp})$ must also contain a subsegment of $\partial H$, and vice versa. 
Moreover, the respective entry and exit points in $\partial H$ for the two projected paths must be within distance $R$ of each other in $\thick$ with the path metric. 
It is possible that if $\pi_{\WP}(\gamma_{WP})$ has a subsegment along $\partial H$ that is less than $R'$ in length then $\gamma_{\hyp}$ does not enter $H$ at all, and vice versa. 
If such is the case then we choose the sub-segment along $\gamma_{\hyp}$ that records the excursion in $H$ to be empty, and vice versa. 
This concludes the proof that for any pair of points $p,q \in \thick$, the respective geodesics $\gamma_{\WP}[p,q]$ and $\gamma_{\hyp}[p,q]$ fellow travel in the thick part and all deviations between them arise in cusp neighbourhoods. 

\section{Deviations during excursions}

We now quantify deviations during a single excursion. 
Consider a cusp excursion of a WP geodesic during which the minimum distance to the cusp  satisfies $\delta_{\textrm{min}} = 1/D$ for $D > 1$. 

In upper half space co-ordinates, a WP geodesic during an excursion describes a curve $\gamma_{\WP}(t)= (x(t), y(t))$ whose tangent vector $v(t)$ has (up to a uniform multiplicative constant) unit size with respect to the model metric (\ref{e.metric}).  
For convenience, we apply a homothety whose constant depends only on $B$ to assume that we are working with $\{z \in \mathbb{C} \, : \, \text{Im} z  \geqslant 1\}$ as the cusp neighbourhood.
This means that all our estimates below are correct up to a multiplicative constant that depends only on $B$. 

An excursion in which $\delta_{\textrm{min}} = 1/D$ with $D > 1$, corresponds to $y_{\text{max}} = D^2$. 
By \cite[Proposition 5.5]{Gad-Mat}, the winding number about the cusp for such an excursion is $D^2$ up to a uniform multiplicative constant. 
In particular, this implies that during such an excursion the point that distance along $\partial H = \{ z \, : \, \text{Im} z = 1\}$ between the entry and exit points is $D^2$ up a uniform multiplicative constant. 

We want to compare $\gamma_{\WP}$ with a hyperbolic geodesic $\gamma_{\hyp}$ that enters and exits $\{z \in \mathbb{C}: \, \mid \text{Im} z \mid \geqslant 1 \}$ within bounded hyperbolic distance of the $\gamma_{\WP}$-entry and exit points.  So we may assume that in the model co-ordinates the entry and exit points of $\gamma_{\WP}$ are $(0,1)$ and $(2D^2,1)$ respectively.

We will analyse the two trajectories $\gamma_{\WP}$ and $\gamma_{\hyp}$ over the first half of their excursions, that is between the entry point and the point with the largest imaginary part $y_{\textrm{max}}$. 
By symmetry, the same analysis holds for the trajectories on their way out. 
For the first half, the hyperbolic geodesic is given by the locus $= \left(x,\sqrt{1+ 2D^2  x - x^2} \right)$. 
To prove our deviation bounds, we will similarly write down the locus for $\gamma_{\WP}$ and then compare with above locus.

\begin{lemma}\label{l.deviations}
A point on the ingoing half of $\gamma_{\WP}$ is given in co-ordinates by
\[
\asymp \left( x, D^{6/5} x^{2/5} \right)
\]
where $\asymp$ means up the individual co-ordinates may differ up to a uniform multiplicative constant. 
\end{lemma}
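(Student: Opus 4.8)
\emph{The plan} is to integrate the geodesic equations for the model metric (\ref{e.metric}) explicitly and then argue that the lower-order terms in (\ref{e.FN}) perturb the result only by uniform multiplicative constants. The essential feature of the model metric $\frac{dx^2 + dy^2}{y^3}$ is its invariance under the horizontal translations $x \mapsto x + \text{const}$; this is the surface-of-revolution symmetry of the cusp. Parametrising the ingoing arc of $\gamma_{\WP}$ with unit speed, so that $\dot{x}^2 + \dot{y}^2 = y^3$ up to a uniform constant, the translation symmetry yields (via Noether, equivalently the Clairaut relation) a conserved quantity $\dot{x}/y^3 = c$ along the geodesic. I would first pin down $c$ from the apex: at $y = y_{\max} = D^2$ one has $\dot{y} = 0$, whence $\dot{x}^2 = y_{\max}^3 = D^6$ and therefore $c = \dot{x}/y_{\max}^3 = D^3/D^6 = D^{-3}$.

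Combining $\dot{x} = c y^3$ with $\dot{y}^2 = y^3 - \dot{x}^2 = y^3(1 - c^2 y^3)$, the trajectory on the ingoing half (where $y$ increases from $1$ to $y_{\max}$) satisfies
\[
\frac{dx}{dy} \;=\; \frac{\dot{x}}{\dot{y}} \;=\; \frac{c\, y^{3/2}}{\sqrt{1 - c^2 y^3}}.
\]
Substituting $s = c^{2/3} y = y/y_{\max}$ and using $c^{-2/3} = y_{\max} = D^2$ converts this into
\[
x \;=\; D^2 \int_{1/D^2}^{\,y/D^2} \frac{s^{3/2}}{\sqrt{1 - s^3}}\, ds \;=\; D^2\, F(y/D^2) + O(D^{-3}),
\]
where $F(w) = \int_0^w s^{3/2}(1 - s^3)^{-1/2}\, ds$ and the contribution of the lower limit $y = 1$ is the negligible $O(D^{-3})$ term.

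The next step is the uniform comparison $F(w) \asymp w^{5/2}$ on $[0,1]$. Near $w = 0$ the integrand is $\asymp s^{3/2}$, so $F(w) \sim \tfrac{2}{5} w^{5/2}$; near $w = 1$ the integrand blows up like $(1 - s)^{-1/2}$, which is integrable, so $F(1)$ is a finite positive constant. Since $F(w)/w^{5/2}$ is continuous and positive on $(0,1]$ with finite positive limits at both ends, it is bounded above and below, giving $F(w) \asymp w^{5/2}$ with uniform constants. Hence $x \asymp D^2 (y/D^2)^{5/2} = y^{5/2}/D^3$ throughout $y \in [1, D^2]$, and inverting produces $y \asymp (D^3 x)^{2/5} = D^{6/5} x^{2/5}$, as claimed. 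As a consistency check, at the apex $y = D^2$ this gives $x \asymp D^2$, matching the fact that the apex lies at the horizontal midpoint of the excursion between the entry and exit points $(0,1)$ and $(2D^2,1)$.

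The remaining, and I expect the most delicate, point is to justify passing from the model metric to the honest WP metric: the lower-order terms $h, h'$ in (\ref{e.FN}) perturb both the unit-speed relation and the conserved quantity, and one must verify that this affects only the multiplicative constants and not the exponents $6/5$ and $2/5$. Because $g$ and its first partials are bounded and $y$ is small throughout the excursion, these corrections are of lower order relative to the leading terms driving the integral for $F$, so the comparison $F(w) \asymp w^{5/2}$ should survive with adjusted constants; making this perturbation estimate precise is the main work. The near-apex regime $c^2 y^3 \to 1$, where $dx/dy$ blows up, is already absorbed into the uniform comparison since the integrand of $F$ is integrable at $s = 1$.
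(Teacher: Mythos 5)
Your integration of the model geodesic flow is correct and recovers exactly the paper's asymptotics: the paper obtains $x(\delta) \asymp 1/(D^3\delta^5)$ and $y(\delta) \asymp 1/\delta^2$, which is precisely your relation $x \asymp y^{5/2}/D^3$. The route, however, is genuinely different. The paper does not integrate an ODE; it re-runs the dyadic-interval winding-number computation of \cite[Proposition 5.5]{Gad-Mat}, truncated at the intermediate depth $\delta$, and that computation is carried out for the actual WP-type metric rather than for the model $(dx^2+dy^2)/y^3$. Your Clairaut argument is cleaner and shows transparently where the exponents come from, but as written it proves the lemma only for geodesics of the model metric.

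That last point is where the genuine gap sits, and it is not the routine ``lower-order terms'' check your final paragraph suggests. Along a true WP geodesic the quantity $\dot x/y^3$ is not conserved, and additive control of its drift is useless here: your Clairaut constant is $c = D^{-3}$, polynomially small in the depth of the excursion, whereas the drift of $\dot x/y^3$ produced by the perturbed Christoffel symbols is bounded only in terms of the quality of the model approximation (a constant depending on $B$), uniformly in $D$; near the bottom of the excursion, where $y \asymp 1$, such errors can exceed $D^{-3}$ by an arbitrarily large factor once $D$ is large. So a Gronwall-type perturbation estimate of the kind you gesture at cannot by itself preserve the exponents $6/5$ and $2/5$. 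What is actually needed is a \emph{quasi-Clairaut relation}: conservation of $\dot x/y^3$ up to uniform multiplicative constants along an excursion of a genuine WP-type geodesic. That multiplicative statement is substantive --- it is part of the excursion technology underlying \cite{BMMW} and \cite{Gad-Mat} --- and the paper's proof inherits it wholesale by quoting \cite[Proposition 5.5]{Gad-Mat}, which is exactly how it sidesteps the model-versus-true-metric issue. Two minor points: in these coordinates $y \in [1, D^2]$ is large during the excursion (the small parameter in the error terms is $1/y \asymp \ell$), so your stated reason ``$y$ is small'' is backwards; and, as with the paper's own formulation, the asymptotic $y \asymp D^{6/5}x^{2/5}$ can only hold once $y$ exceeds a uniform constant, since at the entry point $(0,1)$ the two sides are not multiplicatively comparable --- this caveat is precisely where your $O(D^{-3})$ term enters.
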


\begin{proof}
The co-ordinate $x(\delta)$ is the winding number when $\gamma_{\WP}$ is distance $\delta$ from the cusp. In the calculation using dyadic intervals in the proof of \cite[Proposition 5.5]{Gad-Mat}, let $k$ be such that
\[
c 2^k \asymp \frac{1}{D^3 \delta^3}
\]
where $c = 1/(D^3 \delta_0^3)$. Recall that $J$ was chosen so that
\[
c 2^J \asymp 1
\]
Then repeating line by line the calculation in \cite[Proposition 5.5]{Gad-Mat}, we have
\[
x(\delta) \asymp D^3 \sum_{j=1}^k (c2^j)^{5/3} \asymp D^2 (c 2^k)^{5/3} \asymp \frac{1}{D^3 \delta^5}
\]
On the other hand,
\[
y(\delta) \asymp \frac{1}{\delta^2} 
\]
By expressing $y$ in terms of $x$, we get that a point on the ingoing trajectory of $\gamma_{\WP}$ has coordinates
\[
\asymp \left( x, D^{6/5} x^{2/5} \right)
\]
finishing the proof.
\end{proof}

It is interesting to compare the \texttt{"}shapes\texttt{"} of the hyperbolic and WP trajectories that we considered above.
A hyperbolic geodesic $\gamma_{\hyp}$ that has coarsely the same entry and exit points, namely $(0,1)$ and $(2D^2, 1)$, also attains the maximal imaginary part $D^2$, that is, it is about the same height in the upper half space co-ordinates as $\gamma_{\WP}$. 
However, from Lemma \ref{l.deviations}, we observe that quantitatively the WP trajectory has faster initial gradient. 
So it goes deep in to the cusp neighbourhood more quickly and winds thereafter.
This is consistent with the standard intuition of how geodesics wind around the cusp in the WP metric.

\section{The circle map} 

We fix a base-point $p \in \mathbb{D}$.
By shrinking the cusp neighbourhoods if required, we may assume that $p$ lies in the thick part.
By the results in \cite{Pol-Wei}, the WP geodesic flow is ergodic. 
This implies that a dense set of directions $v \in T^1_p (\mathbb{D}, d_{\WP})$ give WP rays that are recurrent to the thick part.
In fact, identifying $T^1_p (\mathbb{D}, d_{\WP})$ with $S^1$, the set of recurrent directions has full measure in the Lebesuge measure on $S^1$.

For a recurrent direction $v$, let $\gamma^v_{\WP}(t)$ denote the recurrent WP ray that it defines. 
Let $t_n$ be any monotonic sequence of recurrence times such that $t_n \to \infty$. 
We consider the sequence of hyperbolic geodesic segments $[x, \gamma^v_{\WP}(t_n)]$. 
By Theorem \ref{t.main}, the hyperbolic and WP projected paths fellow travel. 
It follows that hyperbolic geodesic segments $[x, \gamma^v_{\WP} (t_n)]$ fellow travel.
So we can pass to a limit to get a unique limiting hyperbolic ray $\gamma^v_{\hyp}$ that fellow travels $\gamma^v_{\WP}$ in $\thick$.
In particular, the hyperbolic ray $\gamma^v_{\hyp}$ is recurrent to $\thick$.
Identifying $T^1_p (\mathbb{D}, d_{\hyp})$ also with $S^1$, we get a map $\psi$ from a dense (full measure) subset of $S^1$ to $S^1$ by sending $v$ to the unit tangent vector at $p$ to $\gamma^v_{\hyp}$. 

In similar fashion, we get the reverse map $\xi$ from a dense (full measure) subset of $S^1$ to $S^1$.
For a recurrent hyperbolic ray $\zeta$ determined by $v \in T^1_p (\mathbb{D}, d_{\hyp})$, we can consider a monotonic sequence of recurrence times $s_n$ such that $s_n \to \infty$ and consider the WP geodesic segments $[x, \zeta(s_n)]$. 
As $(\mathbb{D}, d_{\WP})$ is $\text{CAT}(-\kappa)$ for some $\kappa > 0$, the longer and longer WP geodesic segments fellow travel.
Thus, we can pass to a limiting WP ray. 
By setting up identifications with $S^1$ and sending $v$ to the unit tangent base vector for this WP ray, we define the map $\xi$ on a dense subset of $S^1$ to $S^1$. 

By Theorem \ref{t.main}, the WP ray defined by the direction $\xi \circ \psi(v)$ fellow travels in $\thick$ the WP ray defined by $v$. 
As $(\mathbb{D}, d_{\WP})$ is $\text{CAT}(-\kappa)$ for some $\kappa> 0$, it follows that the ray defined by $\xi \circ \psi(v)$ has to fellow travel the ray defined by $v$ for all times. 
Hence, the rays are identical and $\xi \circ \psi = \text{id}$.
By a similar reasoning, the composition $\psi \circ \xi$ is also well defined and identity. 
Thus, $\psi$ is a bijection from the dense set of recurrent WP-directions to the dense set of recurrent hyperbolic directions.

\subsection{Monotonicity:} We now derive the monotonicity of the map $\psi$. 
First, we fix some notation. 
Given $\alpha, \beta \in S^1$, we denote by $I(\alpha, \beta)$ the (open) sector of directions in $S^1$ going counter-clockwise from $\alpha$ to $\beta$.
We identify the unit tangent spaces $T^1_p (\mathbb{D}, d_{\WP})$ and $T^1_p (\mathbb{D}, d_{\hyp})$ with $S^1$ in way that preserves the orientation induced by the surface $S$.

\begin{lemma}\label{l.circular}
Suppose $v, w \in  \text{Domain} (\psi)$ and distinct. 
For any $\theta \in I(v,w) \cap \text{Domain} (\psi)$, we have $\psi(\theta) \in I(\psi(v), \psi(w)) $.
\end{lemma}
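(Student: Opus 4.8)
The plan is to reformulate circular monotonicity as a statement about ideal endpoints. Since $(\mathbb{D}, d_{\hyp})$ is the hyperbolic plane, the endpoint map sending a direction in $T^1_p(\mathbb{D}, d_{\hyp})$ to the ideal endpoint of the corresponding hyperbolic ray is an orientation-preserving homeomorphism onto $\partial_\infty(\mathbb{D}, d_{\hyp}) \cong S^1$. Hence it suffices to show that the ideal endpoint of $\gamma^\theta_{\hyp}$ lies on the arc of $S^1$ cut off by the endpoints of $\gamma^v_{\hyp}$ and $\gamma^w_{\hyp}$ on the $I(\psi(v), \psi(w))$ side. The inputs I would use are the non-crossing geometry of the WP metric, the fact (from \ref{t.main} and the construction of $\psi$) that each WP ray converges, as a path in the hyperbolic disk, to the ideal endpoint of its fellow-traveling hyperbolic ray, and the injectivity of $\psi$.

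First I would record WP non-crossing: since $(\mathbb{D}, d_{\WP})$ is uniquely geodesic, the three rays $\gamma^v_{\WP}, \gamma^\theta_{\WP}, \gamma^w_{\WP}$ from $p$ meet pairwise only at $p$. Thus $L = \gamma^v_{\WP} \cup \gamma^w_{\WP}$ is a properly embedded line through $p$ that separates $\mathbb{D}$ into two regions, and the connected set $\gamma^\theta_{\WP} \setminus \{p\}$, being disjoint from $L$, lies in the single region $\mathcal{R}$ that near $p$ opens into the sector $I(v,w)$; this is exactly where the hypothesis $\theta \in I(v,w)$ is used. Next I would identify the ideal endpoints of $L$: along a sequence of recurrence times the points $\gamma^u_{\WP}(t_n)$ converge in the closed disk to the ideal endpoint $\psi(u)$ of $\gamma^u_{\hyp}$ for $u = v, w$. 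So $L$ is a line landing at the two distinct ideal points $\psi(v)$ and $\psi(w)$ (distinct by injectivity of $\psi$), and its two complementary regions have ideal boundaries the two arcs of $S^1 \setminus \{\psi(v), \psi(w)\}$.

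It then remains to match sides. Straightening $L$ to a diameter of the disk, the region that locally opens into the counterclockwise sector $I(v,w)$ at $p$ is precisely the region whose ideal arc is the counterclockwise arc $I(\psi(v), \psi(w))$, because the identifications of $T^1_p$ and of the ideal circle with $S^1$ are both chosen compatibly with the orientation of $\mathbb{D}$ (as fixed before the lemma). Thus $\mathcal{R}$ has ideal arc $I(\psi(v), \psi(w))$. Since $\gamma^\theta_{\WP} \subset \overline{\mathcal{R}}$ and $\gamma^\theta_{\WP}(s_n) \to \psi(\theta)$ along recurrence times, the point $\psi(\theta)$ lies in $\overline{\mathcal{R}} \cap S^1 = \overline{I(\psi(v), \psi(w))}$; injectivity of $\psi$ then excludes the two endpoints, giving $\psi(\theta) \in I(\psi(v), \psi(w))$, as required.

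The main obstacle is the convergence of each WP ray to a \emph{single} ideal point despite cusp excursions. I would argue that the thick part of $\gamma^u_{\WP}$ stays within bounded hyperbolic distance of $\gamma^u_{\hyp}$, which converges to $\psi(u)$, so those points converge to $\psi(u)$; the delicate point is that a deep WP excursion into a horoball must not drag the cluster set onto the horoball's parabolic point. This is controlled by recurrence: each excursion is a compact sub-arc and so contributes no further accumulation on $S^1$. The orientation-matching step is routine once $L$ is straightened, but it must be stated carefully so that the counterclockwise conventions at $p$ and at infinity agree, which is exactly what the orientation-compatible identifications guarantee.
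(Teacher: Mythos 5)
Your route is genuinely different from the paper's. The paper argues by contradiction: assuming $\psi(\theta)\notin I(\psi(v),\psi(w))$, it looks at $R$-balls centred at recurrence points of $\gamma^{\psi(\theta)}_{\hyp}$ and runs a dichotomy --- either some ball misses both $\gamma^{\psi(v)}_{\hyp}$ and $\gamma^{\psi(w)}_{\hyp}$, in which case $\gamma^{\theta}_{\WP}$ must cross $\gamma^{v}_{\WP}$ or $\gamma^{w}_{\WP}$ and produce a geodesic bigon, violating Gauss--Bonnet; or every such ball meets one of them, forcing $\gamma^{\psi(\theta)}_{\hyp}=\gamma^{\psi(v)}_{\hyp}$ (say) and hence, by the $\mathrm{CAT}(-\kappa)$ property, $\gamma^{\theta}_{\WP}=\gamma^{v}_{\WP}$, contradicting $\theta\in I(v,w)$. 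You instead work at the ideal boundary: the WP rays from $p$ meet pairwise only at $p$, so $L=\gamma^{v}_{\WP}\cup\gamma^{w}_{\WP}$ separates $\mathbb{D}$, and a Jordan-curve-plus-orientation argument finishes. This is an attractive alternative (it exhibits $\psi$ as a genuine boundary map), but it needs more infrastructure than the paper's proof: properness of the WP rays (true, since WP rays are globally minimizing in a uniquely geodesic space while compact subsets of $\mathbb{D}$ have bounded WP diameter, but worth a sentence), and, crucially, the convergence of each \emph{entire} recurrent WP ray to a single ideal point. Note that for $v$ and $w$ you really need full-ray convergence, not just convergence along recurrence times: the assertion that the two complementary regions of $L$ have ideal boundaries equal to the two arcs of $S^1\setminus\{\psi(v),\psi(w)\}$ fails if $\overline{L}$ meets $S^1$ anywhere else.

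That convergence claim is where your proof has a genuine gap. You flag it as the main obstacle, but the justification offered --- ``each excursion is a compact sub-arc and so contributes no further accumulation on $S^1$'' --- does not prove it. Compactness of each individual excursion arc only says that no single arc touches $S^1$; it does not prevent the union of infinitely many excursion arcs from accumulating at boundary points other than the intended endpoint $\xi$. If the ray made deeper and deeper excursions into horoballs of non-shrinking Euclidean size, the cluster set of the ray would pick up those horoballs' parabolic points, and each arc would still be compact. What actually rules this out is the geometry of the family $\mathcal{H}$: the entry and exit points of the excursions lie in $\thick$, hence converge to $\xi$ by fellow traveling; the $n$-th excursion arc lies in some $H_n\in\mathcal{H}$; no fixed horoball can occur infinitely often among the $H_n$ (otherwise, since $\overline{\partial H}$ meets $S^1$ only at the parabolic point of $H$, the entry points would force $\xi$ to be that parabolic point, and then $\gamma^{\psi(u)}_{\hyp}$ would eventually stay inside $H$, contradicting its recurrence to $\thick$); and in a disjoint family of horoballs at most finitely many members have Euclidean diameter $\geqslant\epsilon$. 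Together these give $\mathrm{diam}(H_n)\to 0$, so $\overline{H_n}$ Hausdorff-converges to $\{\xi\}$ and the whole ray converges to $\xi$. With this argument inserted your proof goes through; without it, the load-bearing step is unsupported.
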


\begin{proof}
The proof follows the same Gauss-Bonnet type arguments. Suppose that there is some $\theta \in I(v, w)$ such that $\psi(\theta)$ is not contained in $I(\psi(v), \psi(w))$. 

Let $S(\alpha,\beta)$ be the open set in $(\mathbb{D}, d_{\hyp})$ swept out by hyperbolic-rays with initial vectors in $I(\alpha, \beta)$. 
It is bounded by the hyperbolic rays $\gamma^\alpha_{\hyp}$ and $\gamma^\beta_{\hyp}$. 

As $\psi(\theta)$ is not in $I(\psi(v), \psi(w))$, the hyperbolic ray $\gamma^{\psi(\theta)}_{\hyp}$ does not intersect $S(\psi(v) ,\psi(w))$. 
Let us fix a monotonic sequence $t_n$ of recurrence times along $\gamma^{\psi(\theta)}_{\hyp}$ such that $t_n \to \infty$. 
Suppose that the $R$-ball centred at some $\gamma^{\psi(\theta)}_{\hyp}(t_n)$ is disjoint from the rays $\gamma^{\psi(v)}_{\hyp}$ and $\gamma^{\psi(w)}_{\hyp}$. 
As $\gamma^\theta_{\WP}$ intersects this ball it follows that $\gamma^{\theta}_{\WP}$ must intersect either $\gamma^v_{\WP}$ or $\gamma^w_{\WP}$. 
Breaking symmetry, let us assume that $\gamma^{\theta}_{\WP}$ intersects $\gamma^v_{\WP}$.
In this case the WP rays $\gamma^{\theta}_{\WP}$ and $\gamma^v_{\WP}$ bound a bigon, contradicting Gauss-Bonnet.

So now suppose that for all $t_n$, the $R$-balls centred at $\gamma^{\psi(\theta)}_{\hyp}(t_n)$ intersect one of $\gamma^{\psi(v)}_{\hyp}$ or $\gamma^{\psi(w)}_{\hyp}$. 
Breaking symmetry, let us assume that it is $\gamma^{\psi(v)}_{\hyp}$.
As this implies the ray $\gamma^{\psi(\theta)}_{\hyp}$ fellow travels the ray $\gamma^{\psi(v)}_{\hyp}$ for all time, it forces $\gamma^{\psi(\theta)}_{\hyp} = \gamma^{\psi(v)}_{\hyp}$.
It follows that $\gamma^\theta_{\WP}$ fellow travels $\gamma^v_{\WP}$.
As the WP metric is $\text{CAT}(-\kappa)$ for some $\kappa> 0$ and $(\mathbb{D}, d_{\WP})$ is a geodesic metric space, it forces $\gamma^\theta_{\WP} = \gamma^v_{\WP}$.
But this is a contradiction because $\theta \in I(v,w)$. 

\end{proof}

Lemme \ref{l.circular} implies monotonicity for the map $\psi$. 
In summary, $\psi$ is a map defined on a dense subset of $S^1$, has dense image, and preserves the circular order.
By a standard exercise, see \cite[Lemma 7.3]{Dov-Mar-Rya-Vuo}, $\psi$ extends to a $S^1$-homeomorphism.

\section{Typical geodesics}\label{s.random}

Let $\ell$ be a Lebesgue measure on $S^1$. 
After identification with $T_p^1 (\mathbb{D}, d_{\WP})$, we can let $\ell$ be the conditional measure induced by the WP Liouville measure. 
Alternatively, $\ell$ could also be the conditional measure induced by the hyperbolic Liouville measure because the two densities are absolutely continuous. 

\subsubsection*{\bf Proof of Theorem \ref{t.singular}:}
We derive Theorem \ref{t.singular} by comparing the growth of the total winding number about the cusp along typical WP and hyperbolic geodesics. 
For either metric, we define the total winding number till a particular recurrence time to be the sum of the winding numbers (without sign) during cusp excursions till that time.

The statistics of cuspidal winding numbers along typical WP geodesics for general WP-type metrics were analysed in \cite{Gad-Mat}. 
The analysis heavily leverages the precise decay of correlations proved in \cite{BMMW} for the exponential mixing of the WP flow. 

For the total winding number comparison to be valid, we also need the time changes between the WP and hyperbolic times to work out.
To be precise, along typical geodesics the WP and hyperbolic times at recurrence points need to be asymptotically comparable.
This is also established in \cite{Gad-Mat} for general WP-type metrics and we shall recall the relevant results at the appropriate steps in the proof of Theorem \ref{t.singular}. 
As the arguments below apply to mildly constrained WP-type metrics, we keep the discussion on a a general footing as before. 

By \cite[Theorem 5.9]{Gad-Mat}, there exists a constant $K_1 > 1$ such that for almost every $p$ and $\ell$-almost every $v \in T_p^1 (\mathbb{D}, d_{\WP})$, the total winding number $W_v(T)$ of the Weil--Petersson ray defined by $v$ around the cusp till time $T$, satisfies
\begin{equation}\label{e.wp-wind}
\frac{1}{K_1} T < W_v(T) < K_1T.
\end{equation}
Also by \cite[Section 5.10]{Gad-Mat}, the hyperbolic distance measured along the recurrence times of a recurrent Weil--Petersson geodesic ray increases linearly in time: there exists a constant $K_2 > 1$ such that 
\begin{equation}\label{e.time-change}
\frac{1}{K_2} T < d_{hyp}(p, \gamma^v_{\WP}(T))  < K_2 T.
\end{equation}
In particular, this implies that when $T$ is a time of recurrence along $\gamma^v_{\WP}$ the hyperbolic time $S$ along $\gamma^{\psi(v)}_{\hyp}$ is comparable to the Weil--Petersson time $T$.
Let $W_{\psi(v)}(S)$ be the total winding number around the cusp till time $S$ of the hyperbolic ray $\gamma^{\psi(v)}_{\hyp}$. 
From Theorem \ref{t.main} it follows that there exists a constant $K_3 > 1$ such that when $S$ is the hyperbolic time that corresponds to a time of recurrence $T$ along $\gamma^v_{\WP}$, we have
\begin{equation}\label{e.wind-rel} 
\frac{1}{K_3} W_v(T)  < W_{\psi(v)}(S) < K_3 W_v(T).
\end{equation} 
Putting inequalities \ref{e.wp-wind}, \ref{e.time-change} and \ref{e.wind-rel} together, we can conclude that there exists a constant $K_4> 1$ such that
\begin{equation}\label{e.winding}
\frac{1}{K_4} S < W_{\psi(v)}(S) < K_3 S
\end{equation} 
i.e., the total winding along the corresponding hyperbolic geodesic grows linearly in hyperbolic time.

On the other hand, by \cite[Theorem 1.6]{Gad}, for $\ell$-almost every $v \in (T_p^1 \mathbb{D}, d_{\hyp})$, there is $S \log S$ lower bound for the growth of total winding number till time $S$ along the hyperbolic geodesic ray $\gamma^v_{\hyp}$. 
Thus, the hyperbolic geodesic rays that are the images of typical WP rays under $\psi$ are atypical and this concludes the proof of Theorem \ref{t.singular}.

\subsection{Continued fractions:} 
We can now invoke Theorem \ref{t.singular} for the modular surface to give a concrete description. 
Series showed that hyperbolic geodesic rays on the modular surface are coded by the continued fraction expansion of their endpoint at infinity. 
See \cite{Ser}.
The continued fraction coefficients track the combinatorics of how the ray crosses the triangles in the Farey tessellation.
Namely, for each triangle that the geodesic cuts through one records right or left depending on which of the resulting components is again a triangle.
In this way, one gets an infinite sequence of $R$s and $L$s.
Breaking symmetry, if the sequence has the form $R^{a_1} L^{a_2} \cdots $ then the number $a_j$ is exactly the $j^{\text{th}}$ coefficient of the endpoint at infinity. 
Suppose that $T$ is a recurrence time, $N(T)$ is the number of excursions till $T$, and $H_j \, : \, 1 \leqslant j \leqslant N(T)$ is the horoball for the $j^{\text{th}}$-excursion.
We may choose upper half-space co-ordinates $(x,y)$ so that $H_j$ is of the form $\{ y > c \}$ for some constant $c \geqslant 1$ and the fundamental domain intersected with $H_j$ is the strip $0 \leqslant x < 1$. 
Then it readily follows that up to a multiplicative constant that depends only on $c$, the distance between the entry and exits points on $\partial H_j$ equals the continued fraction coefficient $a_j$. 
See \cite{Gad} for the details.

Using the fellow travelling result, namely Theorem \ref{t.main}, we can also record continued fraction coefficients along a recurrent WP ray $\gamma^v_{\WP}$ by tracking the coefficients along the corresponding hyperbolic ray $\gamma^{\psi(v)}_{\WP}$.
We could also define coefficients using the number of cuspidal fundamental domains crossed during each excursion of $\gamma^v_{\WP}$.
Again by Theorem \ref{t.main}, the two definitions produce the same coefficients that differ up to a bounded additive error. 
So from now on, we will use the second definition. 
 
Consider a recurrence time $T$ for $\gamma^v_{\WP}$. 
Then the segment $[\gamma^v_{\WP}(0), \gamma^v_{\WP}(T)]$ has completed finitely many excursions giving us a finite list of coefficients $[a_1, a_2, \dots, a_{N(T)}]$. 
Note that there is a constant $K_5 > 1$ such that for any recurrence time $T$,
\[
\frac{1}{K_5} W_v(T) <  a_1 + a_2 + \dots + a_{N(T)} < K_5 W_v(T)
\]
where $W_v(T)$ is the total cuspidal winding number till time $T$. 
By the ergodicity of the Weil--Petersson flow and an argument similar to \cite[Lemma 3.8]{Gad}, there exists a constant $K_6 > 1$ such that 
\[
\frac{1}{K_6} T < N(T) < K_6 T,
\]
that is, the number of excursions till time $T$ along a typical WP ray grow linearly in $T$. 
In particular, combining the above facts about winding number growth and the number of excursions with (\ref{e.wp-wind}), we get that there exists a constant $K_7 > 1$ such that along a typical WP ray $\gamma^v_{\WP}$, we have 
\[
\frac{1}{K_7} < \frac{a_1 + a_2 + \dots + a_{N(T)}}{N(T)} < K_7
\]
for all recurrence times $T$ that are large enough (how large depends on $v$).
In other words, the average coefficient along a typical WP ray (and hence along the corresponding hyperbolic ray) is bounded. On the other hand, the average coefficient along a typical hyperbolic ray is unbounded and exhibits an interesting \texttt{"}trim-sum\texttt{"} property. See the introduction in \cite{Gad}.


\end{document}